\newcommand \R {{ \mathbb R}}
\newcommand \Z {{ \mathbb Z}}
\newcommand \N {{ \mathbb N}}
\newcommand{\vol} {{\operatorname{vol} }}
\newcommand \re {{ \operatorname{Re} }}
\newcommand{\SL}{\operatorname{SL}}
\newcommand{\PSL}{\operatorname{PSL}}
\newcommand{\ASL}{\operatorname{ASL}}
\renewcommand{\sl}{\operatorname{\mathfrak s\mathfrak l}}
\newtheorem{theorem}{Theorem}[section]
\newtheorem {lemma} {Lemma}[section]
\newtheorem{remark}{Remark}[section]
\title[Effective equidistribution of coordinate horocycle flows]
{Effective equidistribution for some unipotent flows in $\PSL(2, \R)^k$ mod cocompact, irreducible lattice}
\author{James Tanis}
\begin{document}

\begin{abstract}
Let $d \geq 2$, and let $\Gamma \subset \PSL(2, \R)^d$ be an irreducible, cocompact lattice.   
We prove a sharp estimate up to a logarithmic factor on the rate of equidistribution of 
coordinate horocycle flows on $\Gamma \backslash\PSL(2, \R)^d$.  

\end{abstract}
\maketitle
\section{Introduction}

There has been greater interest recently in making Ratner's equidistribution theorems effective (see \cite{R1} and  \cite{R2}).  
Green-Tao proved all Diophantine nilflows 
on any nilmanifold become equidistributed at polynomial speed, see \cite{GT}. 
Flaminio-Forni proved rather sharp estimates on the speed of 
equidistribution for a class of higher step nilmanifolds.   
Einsiedler-Margulis-Venkatesh proved effective equidistribution 
for large closed orbits of semisimple groups 
on homogeneous spaces, 
under some technical restrictions, in \cite{EMV}.  

Effective equidistribution, at an unspecified rate, of coordinate horocycle flows on $\Gamma \backslash \PSL(2, \R)^d$
can be derived from a recent and general quantitative mixing 
result of Bj$\ddot{\textrm{o}}$rklund, Einsiedler and Gorodnik, see \cite{BEG}.  
In what follows, we prove sharp estimates, up to a logarithmic factor, 
for the equidistribution of coordinate horocycle flows 
in compact $\Gamma \backslash \PSL(2, \R)^d$.  

These flows are one-dimensional and non-horospherical unipotent flows 
on a (non-solvable) homogeneous space.    
The first effective equidistribution result in this setting 
is due to Venkatesh on the product flow given by the 
horocycle flow and a circle translation on compact 
$\Gamma \backslash \SL(2, \R) \times \R / \Z$, see \cite{V}.   
Recently, the author and Vishe refined 
his approach and established a sharper 
rate of equidistribution that is independent of the spectral gap of the Laplacian, see \cite{TV}.   
At the same time, Flaminio, Forni and the author obtained 
a still sharper estimate (also independent of the spectral gap)
for the equidistribution of this flow 
via a completely different method, see \cite{FFT}.  
Effective estimates for the equidistribution of horocycle 
lifts to $\ASL(2, \Z) \backslash \ASL(2, \R)$ were proved 
in a series of papers by 
Str$\ddot{\text{o}}$mbergsson, Browning and Vinogradov, and Vinogradov, 
see \cite{St}, \cite{BV} and \cite{IV}.  

The outline of the paper is as follows.  
We prove Theorem~\ref{theo:equidistribution} first 
by using existing works and the theory of 
unitary representations and invariant distributions.  
The main point in the argument is that the invariant 
distributions for coordinate horocycle flows 
are defined in unitary Sobolev representations of $\PSL(2, \R)$ on $L^2(\Gamma \backslash \PSL(2, \R)^d)$, 
so they are already well-understood by the work of Flaminio and Forni 
on the equidistribution of horocycle flows 
(see Theorem 1.1 and Theorem 1.4 of \cite{FF1}).  
In fact, their work implies there is a basis of invariant distributions 
for coordinate horocycle flows 
which are generalized eigendistributions for 
a corresponding coordinate geodesic flow.  
Using a recent result of Kelmer-Sarnak on 
the strong spectral gap property of irreducible, cocompact lattices in 
$\PSL(2, \R)^d$, we estimate the 
contribution of these  invariant distributions 
to the ergodic integral of a given coordinate horocycle flow 
by iteratively applying the relevant 
coordinate geodesic map, which is a method of Flaminio and Forni in \cite{FF1}. 
The rest of the ergodic integral is estimated by solving a cohomological equation, as in \cite{FF1}.   \\ 

Now we discuss the setting for our result in detail.  
Let $d \in \N_{\geq 2}$ and $\Gamma \subset \PSL(2, \R)^d$ be a cocompact and irreducible lattice, 
and let $M = \Gamma \backslash \PSL(2, \R)^d$. 
Let $(X,U,V)$ be a basis of $\sl_2(\R)$ given by
 \[
 X= \begin{pmatrix} {1}&0\\0& {-1}
 \end{pmatrix}, \quad U=\begin{pmatrix} 0&1\\0& 0
 \end{pmatrix}, \quad V=\begin{pmatrix} 0&0\\1& 0
 \end{pmatrix}.
 \]   
 For each $1 \leq j \leq d$, define the coordinate geodesic and coordinate 
 horocycle vector fields $(X_j, U_j, V_j)$ in $\sl(2, \R)^d$ by 
\[
\begin{aligned}
&X_j := (0, \ldots, 0, X, 0, \ldots, 0) \,, \\
&U_j := (0, \ldots, 0, U, 0, \ldots, 0)\,, \\
&V_j := (0, \ldots, 0, V, 0, \ldots, 0)\,,  \\
\end{aligned}
\]
where the vector fields $X$, $U$ and $V$ respectively appear in the $j_{th}$ position 
of the tuple.  
 These matrices satisfy the commutation relations
 \begin{equation}\label{eq:comm_j}
 [X_j,U_j]= U_j , \quad [X_j,V_j]= -V_j , \quad [U_j,V_j]= 2X_j \,.
 \end{equation}
Now fix an integer $i \leq d$, and denote the coordinate horocycle flow, $\{h_t\}_{t\in \R}$, on $\Gamma \backslash \PSL(2, \R)^d$ by   
\[
h_t(x) = x e^{t U_i}\,,
\]
for any $x \in M$.  

Let $L^2(M)$ be the separable Hilbert space of 
complex-valued square-integrable functions on $M$ 
with respect to the Haar measure $\vol$.   
Let $C^\infty(M)$ be the space of smooth functions on $M$ 
and let $\mathcal D'(M) = \left(C^\infty(M)\right)'$ 
be its distributional dual space.  
Any element of the Lie algebra $\sl(2, \R)^d$ acts 
on $\mathcal D'(M)$ via the right regular representation.  

For each $j$, the center of the enveloping algebra of $\sl(2, \R)^d$ 
contains the second-order differential operator
$$
\Box_j := \left[- X_j^2 - 1/2(U_jV_j + V_jU_j)\right]\,.
$$
The Laplacian operator $\triangle$ is a second-order, 
elliptic element in the enveloping algebra of $\sl(2, \R)^d$.  
Moreover, it is an essentially self-adjoint differential 
operator on $L^2(M)$ that can be written 
$$
\triangle := \triangle_i + \triangle_{0}\,,
$$
where
$$
\triangle_i := -X_i^2 - 1/2(U_i^2 + V_i^2) \text{ and } \triangle_0 := - \sum_{j \neq i} X_j^2 + 1/2(U_j^2 + V_j^2)\,.
$$

Denote the inner product for $L^2(M)$ by $\langle \,, \rangle$\,.     
The Sobolev space of order $s \in \R^+$ is the maximal domain $W^s(M)$ of the inner product 
\[
\langle f, g\rangle_s := \langle (I  + \triangle)^s f, g\rangle\,,
\] 
where $I$ is the identity operator on $L^2(M)$.  Define $\Vert f \Vert_s^2 := \langle f, f\rangle_s$, and set $\Vert f \Vert := \Vert f \Vert_{0}$.  

The space of $s$-order distributions on $W^s(M)$ is $W^{-s}(M) = \left(W^s(M)\right)'$.  
Because $M$ is compact, 
$C^\infty(M) = \bigcap_{s > 0} W^s(M)$ and 
$\mathcal D'(M) = \cup_{s> 0} W^{-s}(M).$

We denote the space of distributions in $W^{-s}(M)$ 
that are invariant under $U_i$ by 
$$
\mathcal{I}^s(M) := \left\{D \in W^{-s}(M) :  U_i D = 0\right\}\,.
$$
Let $\mathcal I(M) := \bigcup_{s > 0} \mathcal I^s(M)$.  
By (\ref{equa:regular_to_pi}), 
the classification of $U_i$-invariant distributions in $\mathcal I(M)$ into irreducible, 
unitary representation spaces is given by the corresponding 
classification of horocycle flow-invariant distributions from Theorem 1.1 of \cite{FF1}.  

Let $\sigma_{pp}$ be the eigenvalues of $\triangle_i$ on $L^2(M)$, 
which by the representation theory of $\SL(2, \R)$, 
coincides with the positive eigenvalues of $\Box_i$ on $L^2(M)$.  
\begin{theorem}[Flaminio-Forni, \cite{FF1}]\label{theo:inv-dist}
The space $\mathcal I(M)$ has infinite countable dimension.  
There is a decomposition  
$$
\mathcal I(M) = \bigoplus_{\mu \in \sigma_{pp}} \mathcal I_\mu \oplus \bigoplus_{n \in \Z^+} \mathcal I_n \oplus \mathcal I_c\,,
$$
where 
\begin{itemize} 
\item  for $\mu = 0$, the space $\mathcal I_0$ is spanned by the 
Haar measure on $M$; 
\item  for $0 < \mu < 1/4$, there is a splitting 
$\mathcal I_\mu = \mathcal I_\mu^+ \oplus \mathcal I_\mu^-$, 
where $\mathcal I_\mu^{\pm} \subset W^{-s}(M)$ 
if and only if $s > \frac{1 \pm \sqrt{1 - 4 \mu}}{2}$, 
and each subspace has dimension equal to the multiplicity of $\mu \in \sigma_{pp}$; 
\item for $\mu \geq \frac{1}{4}$, the space 
$\mathcal I_\mu \subset W^{-s}(M)$ if and only if $s > 1/2$, 
and it has dimension equal to twice the multiplicity of $\mu \in \sigma_{pp}$;
\item for $n \in \mathbb Z_{\geq 2}$, 
the space $\mathcal I_n \subset W^{-s}(M)$ 
if and only if $s > n/2$ and it has dimension equal to twice the multiplicity 
of $\mu = \frac{1}{4}(-n^2 + 2n) \in \text{spec}(\Box_i)$,  
\item the space $\mathcal I_c \subset W^{-s}(M)$ 
if and only if $s > 1/2$.  
It is defined on the continuous spectrum of $\triangle_i$ on $W^{-s}(M)$,   
and it has infinite countable dimension.   
\end{itemize}
\end{theorem}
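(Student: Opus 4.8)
The plan is to reduce the statement to the representation-theoretic classification of horocycle-invariant distributions in a single irreducible unitary representation of $\PSL(2,\R)$ — namely Theorem~1.1 of \cite{FF1} — and then reassemble the pieces using the direct integral decomposition \eqref{equa:W^s-decomp}. First I would fix $s > 1/2$ and use \eqref{equa:regular_to_pi} together with Lemma~\ref{lemm:regular_to_pi} to identify $U_i$ with $d\pi(U)$; under the decomposition $W^s(M) = \int_\oplus \mathcal H_\mu^s$, a distribution $D \in W^{-s}(M)$ is then $U_i$-invariant precisely when its component $D_\mu$ in each $\mathcal H_\mu^s$ is annihilated by $d\pi(U)$ in that representation. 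Since the Casimir $\Box_i$ acts as the scalar $\mu$ on $\mathcal H_{\mu,s}$, each $\mathcal H_\mu^s$ is, up to the at most countable multiplicity, a fixed irreducible unitary representation of $\PSL(2,\R)$ (principal series, complementary series, discrete series, or — at $\mu = 0$ — the trivial representation). Here Lemma~\ref{lemm:KS_gap} is what makes the analysis uniform: it keeps the complementary-series Casimir parameters bounded away from $0$, so the Sobolev thresholds that appear below stay away from the degenerate case and the distributional components can be combined with the correct summability.

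Second, in each irreducible component I would invoke Theorem~1.1 of \cite{FF1}. In a principal- or complementary-series representation with Casimir parameter $\mu$, the space of horocycle-invariant distributions is spanned by generalized eigenvectors of the geodesic flow, with the stated Sobolev orders: for $0 < \mu < 1/4$ there is a splitting into a $+$ and a $-$ eigenline lying in $W^{-s}$ if and only if $s > \tfrac{1 \pm \sqrt{1-4\mu}}{2}$, while for $\mu \geq 1/4$ the two-dimensional space lies in $W^{-s}$ if and only if $s > 1/2$; in the discrete-series (holomorphic/antiholomorphic) representation of weight $n \geq 2$, corresponding to $\mu = \tfrac14(-n^2 + 2n)$, the invariant distributions form a two-dimensional space lying in $W^{-s}$ exactly when $s > n/2$; and the trivial representation contributes the constants, i.e. the Haar measure on $M$. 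Summing over multiplicities of the point spectrum $\sigma_{pp}$ of $\triangle_i$ yields $\bigoplus_{\mu}\mathcal I_\mu$ and $\bigoplus_{n}\mathcal I_n$, and integrating over the continuous part of $\Spec(\Box_i)$ yields $\mathcal I_c$. Infinite countable dimension of $\mathcal I(M)$ is then immediate, since already $\bigoplus_{\mu\in\sigma_{pp}}\mathcal I_\mu$ is infinite-dimensional: the Laplacian on the compact manifold $M$ has infinitely many eigenvalues, hence so does $\Box_i$ on $L^2(M)$.

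The main obstacle is bookkeeping at the level of the direct integral rather than any genuinely new estimate. One must verify that an element of $W^{-s}(M)$ is the ``sum'' of its components $D_\mu$ with the correct convergence — the weighted $L^2(dm)$ condition coming from the $(I + \triangle)^{-s}$ weighting — that $U_i$-invariance is equivalent to componentwise $d\pi(U)$-invariance, and that the Sobolev thresholds of Theorem~1.1 of \cite{FF1}, which are sharp in each irreducible piece, remain sharp after assembly into $\mathcal I(M) = \bigcup_{s>0}\mathcal I^s(M)$. This last point is exactly where Lemma~\ref{lemm:KS_gap} enters: without a strong spectral gap the complementary-series threshold $\tfrac{1-\sqrt{1-4\mu}}{2}$ would approach $0$, and one could no longer describe $\mathcal I(M)$ cleanly. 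All of this was carried out in \cite{FF1} for $k=1$, so the content of the argument is that \eqref{equa:W^s-decomp} together with Lemma~\ref{lemm:KS_gap} lets those arguments run essentially verbatim in the present setting.
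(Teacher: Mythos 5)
Your proposal is correct and follows the same route the paper takes: the paper presents this theorem as a direct import of Theorem~1.1 of \cite{FF1}, transferred to $M$ via the identification $U_i = d\pi(U)$ from \eqref{equa:regular_to_pi} and the direct integral decomposition \eqref{equa:W^s-decomp}, exactly as you outline. The only quibble is that Lemma~\ref{lemm:KS_gap} is not actually needed for this classification statement (Flaminio--Forni's result holds for any lattice, with the thresholds depending on $\mu$ just as written); the spectral gap becomes essential only later, when the exponents $\mathcal S_D$ must be bounded away from zero to obtain uniform equidistribution rates.
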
 

For $s > 1/2$, Theorem~1.4 of \cite{FF1} shows $\mathcal I(M)$ 
has a countable basis $\mathcal B^s$ 
of unit normed (in $W^{-s}(M)$), generalized eigenvectors 
for the geodesic flow $\{e^{t X_i}\}_{t \in \R}$. 

For any $s > 1$, 
let 
$$
\mathcal B_{+}^{s} := \bigcup_{\mu \in \sigma_{pp}\backslash \{\frac{1}{4}\}} \mathcal B^{s} \cap \mathcal I_\mu^{s}\,, 
$$ 
be a basis of $U_i$-invariant distributions for $\left(\bigoplus_{\mu \in \sigma_{pp} \backslash \{\frac{1}{4}\}} \mathcal I_\mu\right)\,.$ 
Let 
$$
\mathcal B_{-}^s := \left(\bigcup_{\mu \geq \frac{1}{4}} \mathcal B^{s} \cap \mathcal I_\mu^{s}\right) \backslash \mathcal B_+^s\,.
$$
be a basis of invariant distributions for the rest of principal series.  
It will also be convenient to define 
$$
\mathcal B_{1/4}^s := \mathcal B^s \cap \mathcal I_{1/4}^s\,.
$$

For $D \in W^{-s}(M)$, let 
$$
\mathcal S_D := 
\left\{
\begin{array}{lll} 
\frac{1 \pm \re\sqrt{1 - 4 \mu}}{2} & \text{ if } D \in \mathcal I_\mu^\pm , \mu > 0\,; \\
n/2 & \text{ if } D \in \mathcal I_n, n \in \mathbb Z_{\geq 2}\,; \\
1/2 & \text{ if } D \in \mathcal I_c\,.
\end{array}
\right.
$$
We remark that by Lemma~\ref{lemm:KS_gap}, 
\[
\inf \left\{\frac{1 - \re\sqrt{1 - 4 \mu}}{2} > 0 : \mu \in \text{spec}(\Box_i) \cap \R^+\right\} > 0\,.
\]

For $T \geq 1$, let $\log^+ T := \max\{1, \log T\}$.  
\begin{theorem}\label{theo:equidistribution}
Let $\alpha > 3d/2 + 1$ and $d \in \N_{\geq 2}$.  
Let $\Gamma \subset \PSL(2, \R)^d$ 
be a cocompact, irreducible lattice.  

Then there is a constant $C_\alpha := C_\alpha(\Gamma) > 0$ such that for all $(x, T) \in M \times \R_{\geq 1}$, 
and there are real numbers $\{c_\mathcal D(x, T)\}_{D \in B_{+}^\alpha \cup B_{-}^\alpha}$ 
and distributions $\mathcal D_{x, T}^{\alpha, 2}, \mathcal R_{x, T}^\alpha \in W^{-\alpha}(M)$ 
such that the following estimate holds.  

For all $f \in W^\alpha(M)$,  
\begin{align*}
\frac{1}{T} \int_0^T f \circ h_t(x) dt - \int_M f d\vol &= \sum_{D \in \mathcal B_+^\alpha} c_D(x, T) D(f) T^{-\mathcal S_D}  \\ 
& + \sum_{D \in \mathcal B_{-}^\alpha} c_D(x, T) D(f) T^{-1/2} \log^+ T \\
&  + \frac{\mathcal D_{x, T}^{\alpha, 2}(f) \log^+ T + R_{x, T}^\alpha(f)}{T}\,,
\end{align*}
where for all $(x, T) \in M \times \R_{\geq 1},$  
\begin{equation}\label{eq:control-coef}
\sum_{D \in \mathcal B_+^\alpha \cup \mathcal B_-^\alpha} |c_D(x, T)|^2 + \|\mathcal D_{x, T}^{\alpha, 2}\|_{-\alpha}^2  + \|R^\alpha_{x, T}\|_{-\alpha}^2 \leq C_{\alpha}\,.
\end{equation}
Additionally, we have the following lower bound.  
For every $D \in \mathcal B^\alpha$, there is a constant $C_\alpha' := C_\alpha(D) > 0$ 
such that for all sufficiently large $T \gg 1$, 
\begin{equation}\label{eq:cd-lower}
\Vert c_D(\cdot, T)\| \geq \left\{ 
\begin{aligned} &C_\alpha' & \text{ if } D \notin \mathcal B^\alpha_{1/4}\,; \\  
&C_\alpha' \log^+ T & \text{ if } D \in \mathcal B^\alpha_{1/4}\,.
\end{aligned}
\right.
\end{equation}
\end{theorem}
\begin{remark}\label{rem:1}
For sufficiently large $T \gg 1$, the upper bound for the above coefficients is sharp up to multiplication by $\log (T)$.  
\end{remark}

\section{Equidistribution of coordinate horocycle flows}

Our estimates are given in terms of Sobolev norms involving a finite number of derivatives.  
In what follows, we let  
$$
\begin{aligned}
s > 3d/2 + 1, & \ \text{ and } \ i \in \{1, 2, \ldots, &d\} \,.
\end{aligned}
$$  
Let $\mathcal  W_s(M)$ be the maximal domain of the operator $(I + \triangle_0)$ 
on $L^2(M)$ with inner product 
$$
\langle f, g\rangle_{\mathcal  W_{s}(M)} := \langle (I + \triangle_0)^s f, g\rangle_{L^2(M)}\,.
$$

Let $\pi:\PSL(2, \R) \to \mathcal U(\mathcal  W_s(M))$ be a unitary 
representation of $\PSL(2, \R)$ defined by 
\begin{equation}\label{equa:rep}
\pi(g) f\left(\Gamma (a, h, b)\right) = f\left(\Gamma (a, h g, b)\right)\,, 
\end{equation}
for any $(a, h, b) \in \PSL(2, \R)^{i - 1} \times \PSL(2, \R) \times\PSL(2, \R)^{d - i}$.  

Now let $d\pi$ be the derived representation of $\pi$.  
The representation $d\pi$ is related to the derived representation of the 
right regular representation by the following simple lemma.
\begin{lemma}\label{lemm:regular_to_pi}
Let $Q \in \sl(2, \R)$, and let 
$Q_i := (0, \ldots, 0, Q, 0, \ldots, 0) \in \sl(2, \R)^d$, 
where $Q$ is in the $i_{\text{th}}$-position.  
Then 
$$
d\pi(Q) = Q_i \,.
$$
\end{lemma}
\begin{proof}
Let $x\in M$ and $f \in C^\infty(M)$.  For any $t\in \R$,
$$
f(x (I^{i - 1}, \exp(t Q), I^{d - i})) = f(x \exp(t Q_i))\,.
$$
We conclude by differentiating at $t = 0$.
\end{proof}
Hence, 
\begin{equation}\label{equa:regular_to_pi}
U_i  = d\pi(U) \text{ and } (I + \triangle_i) f =  d\pi\left(I - X^2 - 1/2(U^2 + V^2)\right).
\end{equation}

Then with respect to a positive Stieltjes measure, $dm(\mu)$, 
the unitary representation $\pi$ has the following direct integral 
decomposition 
$$
\mathcal W_s(M) = \int_{\oplus \mu \in \text{spec}(\Box_i)} \mathcal H_{\mu, s} dm(\mu)
$$
where there Casimir element $\Box_i$ acts as the constant 
$\mu$ on each unitary representation space $\mathcal H_{\mu, s}$. 
Each representation space $\mathcal H_{\mu, s}$ is a direct sum of an at most countable 
number of irreducible unitary representation spaces.  

By irreducibility and (\ref{equa:regular_to_pi}), the vector fields 
$U_i, X_i$ and $V_i$ are \textit{decomposable} into the irreducible representations of $\pi$    
in the sense that 
\begin{equation}\label{equa:W^s-decomp}
W^{s}(M) = \int_{\oplus \mu \in \text{spec}(\Box_i)} \mathcal H_\mu^s\,,
\end{equation} 
where $\mathcal H_\mu^s \subset \mathcal H_{\mu, s}$ 
inherits the inner product from $W^s(M)$.  

The following lemma is a consequence of 
Theorem~2 of \cite{KS}, as described in Section 1.3 of that paper.  
\begin{lemma}[Kelmer-Sarnak, \cite{KS}]\label{lemm:KS_gap}
We have 
\begin{equation}\label{eq:Casimir-i}
\inf \mathrm{spec}(\Box_i) \cap \R^+ > 0\,.
\end{equation}  
\end{lemma}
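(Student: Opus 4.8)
The plan is to deduce the lemma directly from the strong spectral gap property of $\Gamma$, which in the case $G = \PSL(2, \R)^k$ is exactly the content of Theorem~2 of \cite{KS}; the only real work is to match the representation-theoretic formulation there (as explained in Section~1.3 of that paper) with the spectrum of the Casimir element $\Box_i$, and this is essentially a matter of bookkeeping with the unitary dual of $\PSL(2, \R)$.

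First I would reduce to the orthogonal complement of the constants. Since $\Box_i$ lies in the center of the enveloping algebra of the $i_{\text{th}}$ copy of $\sl(2, \R)$ and involves only $X_i$, $U_i$ and $V_i$, it commutes with the full right regular representation of $\PSL(2, \R)^k$ on $L^2(M)$, and in particular it preserves the $\PSL(2, \R)^k$-invariant orthogonal splitting $L^2(M) = \C \mathbf{1} \oplus L^2_0(M)$, where $\mathbf{1}$ is the constant function and $L^2_0(M)$ is the orthogonal complement of $\C\mathbf{1}$. On $\C \mathbf{1}$ the operator $\Box_i$ vanishes, so $\mathrm{spec}(\Box_i) \cap \R^+ = \mathrm{spec}(\Box_i|_{L^2_0(M)}) \cap \R^+$, and it suffices to bound this last set away from $0$.

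Next I would invoke \cite{KS}. Because $\Gamma$ is an irreducible cocompact lattice in $\PSL(2, \R)^k$ with $k \geq 2$, Theorem~2 of that paper says the restriction of $L^2_0(M)$ to the $i_{\text{th}}$-coordinate copy of $\PSL(2, \R)$ has a spectral gap: the trivial representation $1_{\PSL(2, \R)}$ is not weakly contained in it. To translate this into a statement about $\Box_i$, use that $M$ is compact, so $L^2_0(M)$ is a countable Hilbert direct sum of irreducible $\PSL(2, \R)^k$-subrepresentations; on each such summand $\Box_i$ acts by the Casimir scalar of its $i_{\text{th}}$ tensor factor, so $\mathrm{spec}(\Box_i|_{L^2_0(M)})$ is the closure of the resulting countable set of scalars. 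By Bargmann's classification of $\widehat{\PSL(2, \R)}$, any such scalar in the interval $(0, 1/4)$ forces the corresponding tensor factor to be a complementary series representation, while all other positive scalars are $\geq 1/4$. If the positive part of $\mathrm{spec}(\Box_i|_{L^2_0(M)})$ were not bounded away from $0$, there would then be a sequence of complementary series representations occurring in the $i_{\text{th}}$-coordinate restriction of $L^2_0(M)$ whose Casimir parameters tend to $0$; such representations converge to $1_{\PSL(2, \R)}$ in the Fell topology, so $1_{\PSL(2, \R)}$ would be weakly contained in that restriction, contradicting the spectral gap. Hence $\mathrm{spec}(\Box_i|_{L^2_0(M)}) \cap \R^+$ is bounded below by some $c > 0$, which together with the previous paragraph gives $\inf \mathrm{spec}(\Box_i) \cap \R^+ \geq c > 0$.

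The step that requires the most care --- and the only one that is not purely formal --- is the passage in the third paragraph between the functional-analytic statement of the spectral gap in \cite{KS} (non-weak-containment of the trivial representation) and the spectral statement for $\Box_i$. This rests on the explicit description of the Fell topology on $\widehat{\PSL(2, \R)}$ near the trivial representation: the only way the trivial representation is approached is along complementary series whose Casimir parameter degenerates to $0$. Since this translation is precisely the one recorded in Section~1.3 of \cite{KS}, in the write-up I would keep the argument brief and defer to that reference, the substantive input --- the strong spectral gap itself --- being supplied by \cite{KS}.
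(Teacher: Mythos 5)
Your proposal is correct and follows essentially the same route as the paper: both decompose $L^2(M)$ into irreducible unitary representations of $\PSL(2, \R)^k$, observe that only complementary series factors in the $i_{\text{th}}$ slot can produce small positive Casimir eigenvalues, and rule this out by the Kelmer--Sarnak strong spectral gap (Theorem~2 of \cite{KS}). The only difference is in bookkeeping: the paper uses the quantitative form of that theorem (the integrability exponent $p(\rho_m) < 6 + \epsilon$ for all but finitely many $m$, which bounds the complementary series parameter $\nu$ away from the endpoints), whereas you use the equivalent qualitative formulation via non-weak-containment of the trivial representation and the Fell topology.
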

\begin{proof}
For any $j \in \N\setminus\{0\}$, and for a given infinite dimensional 
representation $\rho$ of $\PSL(2, \R)^j$, 
let $p(\rho)$ be the infimum of all $p$ such that there 
is a dense set of vectors $v$ such that $\langle \rho(g) v, v \rangle$ 
is in $L^p(\PSL(2, \R)^j)$.  
Now the regular representation of $\PSL(2, \R)^d$ on $L^2(M)$ 
has a countable, orthogonal decomposition 
into irreducible, unitary representations $\rho_m$ of 
$\PSL(2, \R)^d$.  
We may write $\rho_m = \rho_{m_1} \otimes \cdots \otimes \rho_{m_d}$, 
where the $\rho_{m_i}$ are irreducible, unitary representations of $\PSL(2, \R)$ 
in either the principal series, the complementary series or the 
discrete series.  

We present the following special case of Theorem 2 of \cite{KS}.  
\begin{theorem}[Kelmer-Sarnak]\label{theo:KS}
Let $\Gamma \subset \PSL(2, \R)^d$ be an irreducible, co-compact lattice, and let $\rho_m$ be as above. Then for any $\epsilon > 0$, $p(\rho_m) < 6+\epsilon$, except for a finite number of $m$'s.  
\end{theorem}
Principal series and discrete series representations are tempered, 
that is, if $\rho_{m_j}$ is such a representation, then for all $j$, $p(\rho_{m_j}) = 2$.  
Complementary series representations can be parameterized by 
$\nu_j \in (0, \frac{1}{2})\cup (\frac{1}{2}, 1)$,  
where $p(\rho_{m_j}) = \max\{1/\nu_j, 1/(1-\nu_j)\} \in \R^+$, 
which is unbounded if \eqref{eq:Casimir-i} is not true.  
We also have $p(\rho_m) = \max_j p(\rho_{m_j})$,  
so Theorem~\ref{theo:KS} shows that 
for an at most finite number of exceptions $m$,  
\[
\max\{1/\nu_i, 1/(1-\nu_i)\} \leq p(\rho_m) < 7 \,.  
\]  
This implies the lemma.  
\end{proof}

\subsection{Cohomological equation}
For any $s > 0$, define 
$$
Ann^s(M) := \left\{f \in W^s(M) : D(f) = 0 \text{ for all } D \in \mathcal I^{s}(M)\right\}.
$$

As a consequence of Theorem 1.2 of \cite{FF1}, we derive  

\begin{theorem}\label{theo:coeqn}
For any $0 \leq r < s -  1$, there is a constant 
$C_{r, s} := C_{r, s}(\Gamma) > 0$ such that the following holds.  
Then for any $f\in Ann^s(M)$, 
there exists a function $g \in W^r(M)$, 
unique up to additive constants, 
such that
\[
U_i g = f 
\]
and  
\[
\| g \|_{r} \leq C_{r, s} \| f \|_{s}\,.
\]
\end{theorem}
\begin{proof}
First say $r \in \Z_{\geq 0}$, and let $s$ be as in the theorem.  
By \eqref{equa:regular_to_pi}, by \eqref{eq:Casimir-i}, 
by Theorem 1.2 of \cite{FF1} and by Theorem 2 and Section 1.3 of \cite{KS}, 
we have that for any $f \in Ann^s(M)$ there is a zero average function $g \in W^r(M)$ 
and a constant $C_{r, s} := C_{r, s}(\Gamma) > 0$ such that 
\[
U_i g = f 
\]
and 
\begin{equation}\label{equa:FF_estimate}
\|(I + \triangle_i)^r g\|  
\leq C_{r, s} \|(I + \triangle_i)^s f \|\,.
\end{equation}

Now fix $f$ and $g$ as in the theorem.  
Because $U_i$ commutes with $\triangle_0$, for any $\alpha \geq 0$, we have 
\[
U_i \triangle_0^{\alpha} g = \triangle_0^{\alpha} f\,,
\]
Then for any $\epsilon > 0$ and any $\beta \geq 0$, (\ref{equa:FF_estimate}) gives 
a constant $C_{\beta, \epsilon} := C_{\beta, \epsilon}(\Gamma) > 0$ such that   
\begin{equation}\label{equa:vect_commute}
\|(I + \triangle_i)^\beta \triangle_0^{\alpha} g\|
\leq C_{\beta, \epsilon} \|(I + \triangle_i)^{\beta + 1 + \epsilon} \triangle_0^{\alpha} f\|\,.
\end{equation}

Then 
\begin{align}
\Vert g \Vert_r^2  & = \langle (I + \triangle_i + \triangle_0)^{2r} g, g\rangle \notag \\ 
& = \sum_{n = 0}^{2r} {2r \choose n} \langle (I + \triangle_i)^{n} \triangle_0^{2r-n} g, g\rangle \,. \label{eq:g-triangle_i}
\end{align}
It follows from Lemma~\ref{theo:KS} that $\triangle_0 > 0$ on the subspace of 
zero average functions $W_0^s(M)$ of $W^s(M)$.  
By the spectral theorem, $\triangle_0^{\alpha}$ is defined on $W_0^s(M)$ for any $\alpha > 0$, 
and moreover, because $\triangle_0$ and $(I+\triangle_i)$ commute on $W_0^s(M)$, 
we get that for any $\alpha, \beta \geq 0$, 
$\triangle_0^{\alpha}$ and $(I+\triangle_i)^{\beta}$ commute on $W_0^s(M)$ as well.  

Then by \eqref{equa:FF_estimate}\,,
\begin{align}
\langle (I + \triangle_i)^{n} \triangle_0^{2r-n} g, g\rangle & = \langle (I + \triangle_i)^{n} \triangle_0^{r-n/2} g, \triangle_0^{r-n/2} g\rangle \notag \\ 
& =\Vert (I + \triangle_i)^{n/2} \triangle_0^{r-n/2} g\Vert^2 \notag \\ 
& \leq C_r \|(I + \triangle_i)^{n/2 + 1 + \epsilon} \triangle_0^{r-n/2} f\|^2\,. \label{eq:g-f:1}
\end{align}
Because $(I + \triangle_i)^{\epsilon/2}$ commutes with $\triangle_0$ on $W^s(M)$, we have 
\begin{align}
\eqref{eq:g-f:1} &  \leq C_r  \langle (I + \triangle_i)^{n + 1} \triangle_0^{2r-n} (I + \triangle_i)^{\epsilon/2}  f, (I + \triangle_i)^{\epsilon/2} f\rangle\,. \label{eq:g-f}
\end{align}
By a result of Nelson (Lemma~6.3 of \cite{N}),  
for any $\alpha, \beta \in \Z_{\geq 0}$, 
there is a constant $C_{\alpha + \beta} > 0$ such that 
\[
\vert (I + \triangle_i)^{\alpha} \triangle_0^{\beta}\vert \leq C_{\alpha + \beta} (I + \triangle)^{\alpha + \beta}\,.
\]
Therefore
\begin{equation}
\eqref{eq:g-f} \leq C_r \langle (I + \triangle)^{2r+1} (I + \triangle_i)^{\epsilon/2}  f, (I + \triangle_i)^{\epsilon/2} f\rangle \,.  \label{eq:g-f-3}
\end{equation} 
Next notice that $(I + \triangle_i)^{\epsilon/2}$ commutes with $(I + \triangle)^{2r+1}$, 
and the spectral theorem gives 
\[
(I + \triangle_i)^{\epsilon/2} \leq (I + \triangle)^{\epsilon/2}\,.
\]
So  
\[
\eqref{eq:g-f-3} \leq C_r \Vert f \Vert_{r+1+\epsilon}^2\,.
\]

It follows that 
\[
\eqref{eq:g-triangle_i} \leq C_r \Vert f \Vert_{r+1+\epsilon}^2\,.
\]
By interpolation, the same estimate holds for any $r\geq 0$, 
which completes the proof of Theorem~\ref{theo:coeqn}   
\end{proof}

\subsection{Proof of Theorem~\ref{theo:equidistribution}}
For all $(x, T) \in M \times \R^+$, 
write $\gamma_{x, T}$ as 
$$
\gamma_{x, T}(f) := \frac{1}{T} \int_0^T f \circ h_t(x) dt\,.  
$$
We may orthogonally project $\gamma_{x, T}$ in $W^{-s}(M)$ 
onto a basis of $\{h_t\}_t$-invariant 
distributions described in Theorem~\ref{theo:inv-dist}.  
Let $\mathcal C_{\gamma_{x, T}}$ be orthgonal projection in $W^{-s}(M)$ 
of $\gamma_{x, T}$ into $\langle \bigcup_{D \in \mathcal B_-^s / \mathcal B_{1/4}^s}D\rangle.$  
For each $D \in \mathcal B^{s} \cap \langle \mathcal C_{\gamma_{x, T}}\rangle^\bot$, 
let $D_{\gamma_{x, T}}$ be orthogonal projection in $W^{-s}(M)$ of $\gamma_{x, T}$ 
onto $\langle D\rangle$.  
Then there is a remainder $R_{\gamma_{x, T}}$ such that 
\begin{equation}\label{equa:gamma-decomp}
\gamma_{x, T} = \left(\sum_{D \in \mathcal B^{s}\cap \langle \mathcal C_{\gamma_{, T}}\rangle^{\bot}} D_{\gamma_{x, T}}\right) \oplus \mathcal C_{\gamma_{x, T}} \oplus R_{\gamma_{x, T}}\,. 
\end{equation}
Notice that the space of invariant distributions in each irreducible, 
unitary representation is at most two dimensional.  
So Lemma 5.2 of \cite{FF1} gives that for some constant 
$C_s  > 0$, the quantity 
\begin{equation}\label{equa:norm_coefficients}
 \sum_{D \in \mathcal B^{s}(M)} \|D_{\gamma_{x, T}}\|_{-s}^2 
 + \|\mathcal C_{\gamma_{x, T}}\|_{-s}^2 + \|R_{\gamma_{x, T}}\|_{-s}^2 \,.
\end{equation}
satisfies 
\begin{equation}\label{equa:decom-like-gamma}
C_s^{-2} \|\gamma_{x, T}\|_{-s}^2 \leq (\ref{equa:norm_coefficients}) \leq C_s^2 \|\gamma_{x, T}\|_{-s}^2\,.
\end{equation}    

We prove Theorem \ref{theo:equidistribution} by estimating  
each of the terms in (\ref{equa:norm_coefficients}).  

\begin{lemma}\label{lemm:remainder}
Let $s > \frac{3d}{2} + 1$.  There is a constant 
$C_s := C_s(\Gamma) > 0$ such that for any $x \in M$ and any $T > 0 $, 
$$
\|R_{\gamma_{x, T}}\|_{-s} \leq \frac{C_s}{T}\,.
$$
\end{lemma}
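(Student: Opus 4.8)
The plan is to bound the remainder $R_{\gamma_{x,T}}$ by controlling the full Sobolev norm $\|\gamma_{x,T}\|_{-s}$ of the orbital average and then invoking \eqref{equa:decom-like-gamma}. Concretely, $R_{\gamma_{x,T}}$ is an orthogonal projection of $\gamma_{x,T}$, so $\|R_{\gamma_{x,T}}\|_{-s} \le \|\gamma_{x,T}\|_{-s}$, and it therefore suffices to show that $\|\gamma_{x,T}\|_{-s} \le C_s/T$ on the relevant piece, or rather that the part of $\gamma_{x,T}$ not captured by the invariant distributions $\mathcal B^s \cup \mathcal C_{\gamma_{x,T}}$ decays like $T^{-1}$. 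The natural route is to test $\gamma_{x,T}$ against $f \in W^s(M)$ and split $f = f_0 + g$ where $f_0$ is the component of $f$ in the span of the invariant distributions' "dual" directions and $g \in Ann^s(M)$; for $g$ the cohomological equation (Theorem~\ref{theo:coeqn}) gives $h \in W^r(M)$ with $U_i h = g$ and $\|h\|_r \le C_{r,s}\|g\|_s \le C_{r,s}\|f\|_s$, and then
$$
\gamma_{x,T}(g) = \frac{1}{T}\int_0^T (U_i h)\circ \phi_t^i(x)\, dt = \frac{1}{T}\left(h(x e^{TU_i}) - h(x)\right),
$$
by the fundamental theorem of calculus, since $\frac{d}{dt} h\circ\phi_t^i = (U_i h)\circ\phi_t^i$.

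The key step is then the Sobolev embedding: for $s > 3k/2$ (which is implied by $s > 3k/2 + 1$, and in fact we want $r$ with $3k/2 < r < s - 1$, which is available precisely because $s > 3k/2 + 1$), $W^r(M)$ embeds continuously into $C^0(M)$ since $\dim M = 3k$, so $|h(x)|, |h(xe^{TU_i})| \le C_r \|h\|_r \le C_{r,s}\|f\|_s$. This yields $|\gamma_{x,T}(g)| \le \frac{2C_{r,s}}{T}\|f\|_s$ for all $x$ and all $T > 0$. Since this holds for every $g$ in the annihilator, and $R_{\gamma_{x,T}}$ lives in (a subspace of) the annihilator-dual after quotienting out all the invariant distributions in $\mathcal B^s$ and $\mathcal C_{\gamma_{x,T}}$, we get $\|R_{\gamma_{x,T}}\|_{-s} = \sup\{|R_{\gamma_{x,T}}(g)| : \|g\|_s \le 1\} \le 2C_{r,s}/T$, after also noting that $R_{\gamma_{x,T}}(f) = \gamma_{x,T}(f)$ for $f$ orthogonal (in $W^s$) to all the invariant-distribution directions, and that such $f$ automatically lie in $Ann^s(M)$ up to the finite-dimensional discrepancy handled by the decomposition \eqref{equa:gamma-decomp}.

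The main obstacle is bookkeeping the compatibility of the orthogonal decomposition \eqref{equa:gamma-decomp} in $W^{-s}(M)$ with the annihilator $Ann^s(M) \subset W^s(M)$: one must check that a function $f \in W^s(M)$ which is $W^s$-orthogonal to the subspace dual to $\mathcal B^s \cup \{\mathcal C_{\gamma_{x,T}}\}$ is annihilated by enough invariant distributions to apply Theorem~\ref{theo:coeqn}, and that the basis $\mathcal B^s$ together with $\mathcal C_{\gamma_{x,T}}$ really does span all of $\mathcal I^s(M)$ in the appropriate sense (it does, by Theorem~\ref{theo:inv-dist} and the construction of $\mathcal B^s$). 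A secondary point is ensuring the constant $C_{r,s}$ is uniform in $T$ and $x$ — but this is automatic since Theorem~\ref{theo:coeqn} and the Sobolev embedding constants do not depend on $T$ or $x$, and the estimate $h(xe^{TU_i}) - h(x)$ uses only the sup-norm bound, not any smoothness at the endpoints. I would also remark that the restriction $s > 3k/2 + 1$ is exactly what makes room for an intermediate $r$ with $3k/2 < r < s-1$, reconciling the Sobolev embedding threshold $r > \dim(M)/2 = 3k/2$ with the loss of one derivative ($r < s - 1$) in the cohomological equation.
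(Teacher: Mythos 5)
Your proposal is correct and follows essentially the same route as the paper: solve the cohomological equation $U_i h = g$ via Theorem~\ref{theo:coeqn}, apply the fundamental theorem of calculus to reduce the orbital average to boundary terms, and use the Sobolev embedding $W^r(M)\hookrightarrow C^0(M)$ for $3k/2 < r < s-1$, which is exactly the window opened by the hypothesis $s > 3k/2+1$. Your additional care about identifying $R_{\gamma_{x,T}}(f)$ with $\gamma_{x,T}(f)$ on the annihilator is a point the paper treats implicitly, but it does not change the argument.
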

\begin{proof}
Let $f \in Ann^s(M)$.  
Then by Theorem \ref{theo:coeqn}, 
for any $\frac{3 d}{2} < r < s - 1$, there is a constant $C_{r, s} := C_{r, s}(\Gamma) > 0$ 
and a function $g \in W^s(M)$ satisfying
$
U_i g = f
$
and 
$$
\| g \|_r \leq C_{r, s} \| f \|_s\,.
$$

Then as in Lemma 5.5 of \cite{FF1}, we get by the Sobolev embedding theorem 
that there is a constant $C_r := C_r(\Gamma) > 0$ such that 
\begin{align*}
|R_{{\gamma_{x, T}}}(f)| = & \frac{1}{T} |\int_0^T f \circ h_t(x) dt| \\
& = \frac{1}{T} |\int_0^T U_i g \circ h_t(x) dt| \\
& = \frac{|g \circ h_T(x) - g (x)|}{T} \\
& \leq \frac{C_r}{T} \| g \|_r \leq \frac{C_{r, s}}{T} \| f \|_s\,.
\end{align*}
\end{proof}

\begin{lemma}\label{lemm:continuous-component}
For every $s > \frac{3d}{2} + 1$, there is a constant $C_{s} := C_s(\Gamma) > 0$ such that the following holds.  
For any $\mu \in \sigma_{pp} / \{\frac{1}{4}\}$, for any $D \in \mathcal I_\mu^{\pm} \cap \mathcal B^s$, and for any $x \in M$ and $T > 1$, 
the distribution $D_{\gamma_{x, T}}$ from \eqref{equa:gamma-decomp} satisfies 
$$
\|D_{\gamma_{x, T}}\|_{-s} \leq C_{s} T^{-\mathcal S_D} \,.  
$$
\end{lemma}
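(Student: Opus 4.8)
The plan is to reduce the bound to a statement inside a single irreducible unitary representation of $\PSL(2,\R)$ and then import the corresponding estimate of Flaminio--Forni, using Lemma~\ref{lemm:KS_gap} to make the implied constant uniform over the spectrum. Since $D$ has unit norm in $W^{-s}(M)$ and $D_{\gamma_{x,T}}$ is by definition the orthogonal projection of $\gamma_{x,T}$ onto the line $\langle D\rangle$, one has $\|D_{\gamma_{x,T}}\|_{-s} = |\langle \gamma_{x,T}, D\rangle_{-s}|$, so it suffices to bound this scalar. The distribution $D$ lives in the single irreducible component attached to $\mu \in \mathrm{spec}(\Box_i)\cap\R^+$ with $\mu \neq 1/4$, and the orthogonal projection of $L^2(M)$ onto that component commutes with the flow $\{\phi_t^i\}$; hence $\langle \gamma_{x,T}, D\rangle_{-s}$ only sees the restriction of the horocycle ergodic average to $\mathcal H_\mu$, which is again a horocycle ergodic average acting on vectors of $\mathcal H_\mu$ --- precisely the object treated in \cite{FF1}. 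So I may work entirely inside $\mathcal H_\mu$, where the space of $\phi_t^i$-invariant distributions is spanned by $D$ and its partner in $\mathcal I_\mu$, and where all Sobolev constants from \cite{FF1} depend only on $s$ and polynomially on $\mu$.

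Inside $\mathcal H_\mu$ I would run the renormalization argument of \cite{FF1}. The relation $[X_i, U_i] = U_i$ gives $e^{-sX_i}\phi_t^i e^{sX_i} = \phi_{e^{-s}t}^i$, hence $\gamma_{x,T}\bigl(f(\,\cdot\, e^{sX_i})\bigr) = \gamma_{x e^{sX_i},\, Te^{-s}}(f)$; taking $s = \log^+ T$ turns the right-hand side into a unit-time horocycle average, whose $W^{-s}$-norm is bounded uniformly in $x$ and in the base point, since $M$ is compact and $s > 3k/2$ (Sobolev embedding), exactly as in the estimate of Lemma~\ref{lemm:remainder}. On the dual side, for $\mu \neq 1/4$ the distribution $D$ is a genuine eigenvector: the pull-back of $D$ by right translation by $e^{sX_i}$ is a scalar multiple $\lambda_D(s)\,D$ with $|\lambda_D(s)| = e^{\mathcal S_D s}$, where $\mathcal S_D = \frac{1\pm\re\sqrt{1-4\mu}}{2}$ and the two exponents of $\mathcal I_\mu^{\pm}$ collide --- producing a Jordan block and a logarithmic factor --- only at $\mu = 1/4$, which is excluded. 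Matching the $D$-components on the two sides of the renormalization identity --- legitimate because the decomposition \eqref{equa:gamma-decomp} is uniformly comparable to an orthogonal one by Lemma~5.2 of \cite{FF1}, see \eqref{equa:decom-like-gamma} --- then expresses $\|D_{\gamma_{x,T}}\|_{-s}$ as $T^{-\mathcal S_D}$ times the analogous quantity for the uniformly bounded unit-time average based at $x e^{(\log^+ T)X_i}$, with a proportionality constant depending only on $s$ and polynomially on $\mu$.

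The only point that genuinely uses the hypotheses, and the main obstacle, is uniformity of the constant over the countably many $\mu$ occurring in $W^s(M)$. Polynomial growth of the \cite{FF1}-constants as $\mu \to \infty$ is absorbed exactly as in the proof of Theorem~\ref{theo:coeqn}: powers of $(I + \triangle_i)$ are dominated by $(I + \triangle)$ at the cost of additional $\triangle_0$-derivatives, and $s > 3k/2 + 1$ leaves enough room, the $\triangle_0$-smoothing also providing summability of the components in \eqref{equa:W^s-decomp}. The genuine difficulty is the limit $\mu \to 0^+$: for $D \in \mathcal I_\mu^+$ one has $\mathcal S_D = \frac{1 - \re\sqrt{1-4\mu}}{2} \to 0$, which would make $T^{-\mathcal S_D}$ vacuous and blow up the constant. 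This is resolved precisely by Lemma~\ref{lemm:KS_gap}: $\mu$ is bounded away from $0$, so $\mathcal S_D$ is bounded below by a positive number depending only on $\Gamma$, as recorded in the remark after Theorem~\ref{theo:inv-dist}. Collecting the per-representation estimate with these uniform bounds gives $\|D_{\gamma_{x,T}}\|_{-s} \leq C_{s,\Gamma}\, T^{-\mathcal S_D}$ with $C_{s,\Gamma}$ independent of $x$, $T$ and $\mu$; the remaining work is the bookkeeping of how the \cite{FF1}-constants depend on $\mu$ and the verification that $s > 3k/2 + 1$ controls all of them simultaneously.
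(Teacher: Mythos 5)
Your setup is right---reduction to a single irreducible component, the renormalization identity $e^{tX_i}\gamma_{x,T}=\gamma_{xe^{-tX_i},Te^{t}}$, the eigendistribution property of $D$ for $\mu\neq 1/4$, and the role of the Kelmer--Sarnak gap---but the central step, where you ``match the $D$-components on the two sides of the renormalization identity'' in a single jump $t=\log T$, does not work, and the justification you offer (comparability of the decomposition to an orthogonal one, \eqref{equa:decom-like-gamma}) does not address the actual obstruction. Write $\gamma_{x,T}=e^{(\log T)X_i}\gamma_{xe^{(\log T)X_i},1}$ and decompose the unit-time average as $D_{\gamma_{\cdot,1}}$ plus the other components. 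The image of the $D$-component is indeed $T^{-\mathcal S_D}D_{\gamma_{\cdot,1}}$; but the image of the remainder $R_{\gamma_{\cdot,1}}$ under $e^{(\log T)X_i}$ need not stay orthogonal to $\langle D\rangle$---the orthogonal splitting $\mathcal I^s(M)\oplus\mathcal I^s(M)^{\perp}$ is not preserved by the geodesic flow, which is exactly the difficulty the paper flags before introducing its iteration---and the operator norm of $e^{(\log T)X_i}$ on $W^{-s}(M)$ grows polynomially in $T$. So the $\langle D\rangle$-component of $\gamma_{x,T}$ picks up an a priori contribution of size $T^{O(s)}$ from the pushed-forward remainder, which destroys the estimate. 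Comparability of the norms in \eqref{equa:gamma-decomp} to an orthogonal decomposition says nothing about how the individual components transform under $e^{tX_i}$.

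The paper's fix is to iterate in bounded steps: with $h\in[1,2]$ and $e^{h\lfloor\log^+T\rfloor}=T$, each step applies only the bounded operator $e^{hX_i}$ (norm uniform in $T$), and the remainder of the time-$e^{lh}$ average is already of size $O(e^{-lh})$ by Lemma~\ref{lemm:remainder}; telescoping gives
$\Vert D_{\gamma_{x,T}}\Vert_{-s}\leq T^{-\mathcal S_D}\Vert D_{\gamma_{xe^{\log T X_i},1}}\Vert_{-s}+C\,T^{-\mathcal S_D}\sum_{l}e^{lh\mathcal S_D}e^{-lh}$,
and the series converges uniformly because $\mathcal S_D<1$ uniformly. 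This is a second place where Lemma~\ref{lemm:KS_gap} is needed: you noted only that $\mathcal S_D$ must be bounded away from $0$, but for $D\in\mathcal I_\mu^{+}$ one also needs $\mathcal S_D=\frac{1+\re\sqrt{1-4\mu}}{2}$ bounded away from $1$, which again follows from $\mu$ being bounded away from $0$. Without the iterative structure (or some substitute controlling the remainder at every intermediate scale), your argument has a genuine gap at its main step.
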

\begin{proof}
Using Lemma~\ref{lemm:regular_to_pi}, 
the argument is the same as in Section~5.3 of \cite{FF1}.  
We give it here for the convenience of the reader.  
 
For any $x\in M$, for any $T \geq 1$ and for any $t \in \R$, we have 
\begin{equation}\label{equa:geodesicmap_gamma}
e^{t X_i} \gamma_{x, T} = \gamma_{x e^{-t X_i}, T e^{t}} \,.
\end{equation}
Then fix $x$ and $T$ as in the lemma, and note $e^{-\log T X_i} \gamma_{x, T} = \gamma_{x e^{\log T X_i}, 1}$\,.  
By Theorem~\ref{theo:inv-dist}, $\mathcal D_{x, T}$ is a generalized eigendistribution for the geodesic flow.  
Because the orthogonal splitting 
$\mathcal I^s(M) \oplus \mathcal I^s(M)^\bot$
is not preserved under $\{e^{t X_i}\}_t$,  
we do not immediately get an estimate of  $\Vert \mathcal D_{x, T}\Vert_{-s}$.  
Instead, we estimate it by an iterative argument. 
 
Let $h \in [1, 2]$ be such that $e^{h \lfloor \log^+T \rfloor} = T$.  
Using \eqref{equa:geodesicmap_gamma}, 
for any $l \in \{0, \cdots, \lfloor \log^+ T \rfloor-1\}$, we have
\begin{equation}\label{equa:iterative}
\begin{aligned}
\Vert D_{\gamma_{x e^{(\log T - (l+1) h)  X_i}, e^{(l+1)h}}} \Vert_{-s} 
& \leq \Vert \exp(hX_i) \mathcal D_{\gamma_{x e^{(\log T-l h) X_i}, e^{l h}}}\Vert_{-s} \\ 
&  + \Vert\left(\exp(h X_i) \mathcal R_{x e^{(\log T-l h) X_i}, e^{l h}} \right)\Vert_{-s} 
\end{aligned}
\end{equation}
Now by Theorem~1.4 of \cite{FF1} and Lemma~\ref{lemm:regular_to_pi},   
$D$ is an eigendistribution of $e^{h X_i}$ with eigenvalue $e^{-h(1 \pm\sqrt{1 - 4 \mu})/2} = e^{-h \mathcal S_D}$. 
Hence, the same is true for $\mathcal D_{\gamma_{x e^{(\log T-lh) X_i}, e^{lh}}}$, for any $l$.  
Also, $e^{h X_i}$ is a bounded operator.  
So there is a constant $C > 0$ such that 
\begin{equation}\label{eq:iterate_2}
\eqref{equa:iterative}  \leq e^{-h\mathcal S_D} \Vert \mathcal D_{\gamma_{x e^{(\log T-l h) X_i}, e^{l h}}}\Vert_{-s} + C \Vert\mathcal R_{x e^{(\log T-l h) X_i}, e^{l h}} \Vert_{-s}\,.
\end{equation}
Recall that $(\gamma_{x , T})_{|_{\langle D \rangle}} = D_{x , T}$, 
so we iterate and get 
\begin{equation}\label{eq:iterate-T}
\begin{aligned}
\Vert \left(\gamma_{x , T}\right)_{|_{\langle D \rangle}}\Vert_{-s} &\leq T^{-\mathcal S_D} \| \mathcal D_{\gamma_{x e^{\log T X_i}, 1}} \|_{-s}^2  \\ 
& + C T^{-\mathcal S_D} \sum_{l = 1}^{\lfloor \log^+T\rfloor} e^{lh\mathcal S_D}  \Vert\mathcal R_{x e^{(\log^+ T-l h) X_i}, e^{l h}} \Vert_{-s}\,.
\end{aligned}
\end{equation}
Moreover, Lemma~\ref{lemm:remainder} gives a constant $C_s := C_s(\Gamma) > 0$ such that for any $l$, 
\[
\Vert \mathcal R_{x e^{(\log^+T-lh) X_i}, e^{lh}}\Vert_{-s} \leq C_s e^{-l h}\,.
\] 
Because $\mu \in \sigma_{pp} / \{\frac{1}{4}\}$, 
the series in \eqref{eq:iterate-T} is bounded by a constant depending only on $s$ and $\Gamma$.     
\end{proof}

\begin{proof}[Proof of Theorem~\ref{theo:equidistribution}] 
By Lemma~\ref{lemm:continuous-component}, it remains to 
prove the upper bounds for distributions in $\mathcal B^s\cap(\mathcal I_c \cup \mathcal I_{1/4})$ and $\mathcal B^s \cap \mathcal I_n,$ for $n \in \N_{\geq 2}$.  
By Lemma~5.1 of \cite{FF1} and Lemma~\ref{lemm:regular_to_pi}, 
there is a constant $C_s > 0$ such that for all $t \in \R$, 
\begin{equation}\label{eq:log-loss}
\Vert \exp(t X_i) D \Vert_{-s} \leq C_s(1 + \vert t \vert) e^{-t/2}\,.
\end{equation}
Then by replacing $e^{-(1 \pm\re\sqrt{1 - 4 \mu})/2} $ with $(1+\vert h \vert)e^{-1/2} $ in formula \eqref{eq:iterate_2} 
of the above argument, we deduce that there is a constant $C_{s} > 0$ such that 
\[
\|D_{\gamma_{x, T}}\|_{-s} \leq C_{s} T^{-(1\pm\re\sqrt{1 - 4 \mu})/2} \log^+T \,.  
\]

For $D \in \mathcal B^s \cap\mathcal I_{1/4}$, Theorem~1.4 of \cite{FF1} and Lemma~\ref{lemm:regular_to_pi} show that $D$ is a generalized eigendistribution for $e^{h X_i}$ satisfying \eqref{eq:log-loss} for all $t \in \R$.  

For $D \in \mathcal B^s\cap \mathcal I_{n}$ and $n \in \N_{\geq 2}$, 
and for $h$ as in Lemma~\ref{lemm:continuous-component}, 
Theorem~1.4 of \cite{FF1} gives that $D$ is a eigendistribution 
for $\exp(h X_i)$ with eigenvalue $e^{-n h/2}$.  
Then the above argument gives,
 for any $x \in M$ and any $T \geq 1$,  
\begin{align}
&\|D_{\gamma_{x, T}}\|_{-s} \leq C_{s} T^{-1} \log^+T& \text{ if }n = 2 \,; \label{equa:discrete_inv-est-1} \\ 
&\|D_{\gamma_{x, T}}\|_{-s} \leq C_{s} T^{-1} & \text{ if }n \in \N_{\geq 3} \,.\label{equa:discrete_inv-est-2}
\end{align}
Now we define the remainder distribution $\mathcal R_{x, T}^s$ appearing in Theorem~\ref{theo:equidistribution} as the orthogonal 
sum of the distribution $R_{\gamma_{x, T}}$ and the distributions $D_{\gamma_{x, T}}$ from \eqref{equa:discrete_inv-est-2}.    
The estimate of $\Vert \mathcal R_{x, T}^s\Vert_{-s}$ follows from Lemma~\ref{lemm:remainder}, formula \eqref{equa:discrete_inv-est-2} and orthogonality.     
Lastly, for $n = 2$, we define 
\[
 \mathcal D_{x, T}^{s, 2} := D_{\gamma_{x, T}}\,,
\]
so \eqref{equa:discrete_inv-est-1} gives the estimate of $\Vert \mathcal D_{x, T}^{s, 2} \Vert_{-s}\,.$  
This concludes the proof for the upper bounds of the distributions 
in Theorem~\ref{theo:equidistribution}.  

The $L^2$ lower bounds can be obtained by an argument  
involving the $L^2$ version of the Gottschalk-Hedlund Lemma.  
Using Lemma~\ref{lemm:regular_to_pi}, the lower bound follows from  Lemma~5.7, Lemma~5.8, Lemma~5.9 and 
Lemma~5.13 of \cite{FF1}.  
We give that argument here for 
the convenience of the reader.  
  
The Gottschalk-Hedlund Lemma says, 
in particular, that if $f$ is not a coboundary for $\{h_t\}_{t}$, 
then the family of functions $\{T\gamma_{x, T}(f)\}_{T \geq 1}$ on $M$ is not 
equibounded in the $L^2$-norm.  

So let $D \in \mathcal B^s_{\pm}$.  Then we can find a function $f \in W^s(M) \cap Ann^s(M)^{\bot}$ such that 
$D(f) = 1$ and for all $\bar D \in \mathcal B_{\pm}^s \setminus \langle D \rangle$, $\bar D(f) = 0$.  
So for all $x \in M$ and $T \geq 1$, 
\[
\gamma_{x, T}(f) = c_{D}(x, T)\,.
\]
Hence, 
\begin{equation}\label{eq:cd-T}
\sup_{T \geq 1} T \Vert c_{D}(\cdot, T) \Vert = +\infty\,.
\end{equation}

If $D \notin \mathcal B_{1/4}^s$, then for any $m \in \N$, 
\[
\Vert c_D(\cdot, T e^{m}) \Vert \geq e^{-m \mathcal S_D} \Vert c_D(\cdot, T) \Vert  - \mathcal E_D^s(x, T,m) \,,
\]
where $\mathcal E_D^s(x, T, m)$ is the contribution of the remainder given by 
\[
\begin{aligned}
\mathcal E_D^s(x, T, m) := & C_s e^{-m\mathcal S_D}  \sum_{l = 1}^{m} e^{l\mathcal S_D}  \Vert\mathcal R_{x e^{(\log^+ T +(m-l)) X_i}, T e^{l}} \Vert_{-s} \\ 
& \leq \frac{C_s}{T} e^{-m\mathcal S_D} \,,
\end{aligned}
\]
for some constant $C_s > 0$.  

By \eqref{eq:cd-T}, there is some $T > 1$ such that 
\[
\Vert c_D(\cdot, T) \Vert \geq 2 \frac{C_s}{T}\,.
\]
This implies \eqref{eq:cd-lower} in the case 
$D \notin \mathcal B_{1/4}^s$.  
The argument for $D \in \mathcal B_{1/4}^s$ is similar, 
see formulas (123) and (124) of \cite{FF1} for details. 
\end{proof}

\begin{proof}[Proof of Remark~\ref{rem:1}]
The proof is essentially given by Corollary 5.17 of \cite{FF1}.  
Let $\mu_0$ be the bottom of the positive spectrum of $\Box_i$ 
on $L^2(M)$.  
From the decomposition in Theorem~\ref{theo:equidistribution} 
it is enough to consider the projection of $\gamma_{x, T}$ onto the 
invariant distributions supported near $\mu_0$ in the spectrum of $\Box_i$.  

First suppose that $\mu_0 \in \sigma_{pp}$. 
Then it is enough to consider $D \in \mathcal I_{\mu_0}$,  
and let $c_D$ be the coefficient for $D$ in the decomposition 
from Theorem~\ref{theo:equidistribution}.  
Because pointwise upper bounds for $c_D$ are the same order in $T$ 
as the $L^2$-lower bounds, 
we have a constant $C_s := C_s(D) > 0$ 
such that for any $x \in M$ 
and any $T \gg 1$ sufficiently large, 
\begin{equation}\label{equa:C_D-bound-|-0}
\vert c_D(x, T)\vert \leq C_s \Vert c_D(\cdot, T)\Vert\,.
\end{equation}
Now we sketch the argument in Corollary~5.17 of \cite{FF1}, 
which proves the estimate in Theorem~\ref{theo:equidistribution} is sharp.  

Let $K \in (0, 1)$ be a constant, and let $A_T := A_{D, T, K}  \subset M$
be defined by 
$$
A_{T} := \left\{x \in M : \vert c_{D}(x, T)\vert > K \Vert c_D(\cdot, T)\Vert\right\}\,.
$$
Using \eqref{equa:C_D-bound-|-0}, 
it follows that 
$$
\left(K^2 + (C_s)^2 \vol(A_T)\right) \Vert c_D(\cdot, T)\Vert_0^2 \geq \Vert c_D(\cdot, T)\Vert^2\,.
$$
It follows that there is a constant $\alpha := \alpha_{s, K, D} > 0$ such that for all $T \gg 1$ 
sufficiently large, 
$$
\vol(A_T)\geq \alpha\,.
$$
Now the $L^2$ lower bounds on $c_D$ prove the remark in this case.  

If $\mu_0$ is contained in the continuous spectrum of $\Box_i$,  
then $\mu_0 \geq \frac{1}{4}$, 
and by Theorem~\ref{theo:inv-dist}, the space $\mathcal I_c$ 
is infinite dimensional.   
So there is a distribution $D \in \mathcal B_-^s \cap \mathcal I_c$ 
that is supported away from $\mu = \frac{1}{4}$.  
Because $D$ is a direct integral of distributions with Casimir parameter $\mu >  \frac{1}{4}$, Theorem~1.4 of \cite{FF1} and Lemma~\ref{lemm:regular_to_pi} give 
$$
\Vert \exp(t X_i) \exp(\mathcal D_{\gamma_{x e^{\log T X_i}, 1}}) \Vert_{-s} = e^{-t/2} \Vert \mathcal D_{\gamma_{x e^{\log T X_i}, 1}} \Vert_{-s} \,.
$$
Then, as in Lemma~\ref{lemm:continuous-component}, we get  
that $D_{\gamma_{x, T}}$ is sharper than the estimate in Theorem~\ref{theo:equidistribution} by a logarithmic factor.  
There is a constant $C_{s, \Gamma} > 0$ such that for any $x \in M$ 
and any $T \geq 1$, 
$$
\|D_{\gamma_{x, T}}\|_{-s} \leq C_{s, \Gamma} T^{-1/2} \,,
$$
Hence, the pointwise upper bound on the coefficient $c_D$ from Theorem~\ref{theo:equidistribution} is the same order in 
$T$ as its $L^2$ lower bound.  
The above argument implies that the pointwise upper 
bound for $\vert c_D(x, T)\vert $ is sharp 
on a set $A_T$ of positive measure.  

Comparing this bound with the upper bounds from all coefficients $c_D$ in Theorem~\ref{theo:equidistribution} proves Remark~\ref{rem:1}.  
\end{proof}

\section{Acknowledgements}
The author is grateful to Giovanni Forni for helpful discussions and encouragement.  I would also like to thank the referee for helpful comments.

\end{document}